\DeclareMathOperator{\id}{id}
\DeclareMathOperator{\diff}{Diff}
\DeclareMathOperator{\sdim}{sdim}
\newcommand{\cM}{\ensuremath{\mathcal M}}
\newcommand{\cW}{\ensuremath{\mathcal W}}
\newcommand{\cP}{\ensuremath{\mathcal P}}
\newcommand{\cO}{\ensuremath{\mathcal O}}
\author[F. Pasquotto]{Federica Pasquotto}
\author[T.O. Rot]{Thomas O. Rot}
\title{On the transitivity of the group of orbifold diffeomorphisms.}
\begin{document}
\subjclass[2010]{57R18, 57R50, 58D05}
\keywords{Orbifolds, differential topology, diffeomorphisms, area preserving mappings}

\maketitle

\begin{abstract}
Consider a connected manifold of dimension at least two and the group of compactly supported diffeomorphisms that are isotopic to the identity through a compactly supported isotopy. This group acts $n$-transitively: any $n$-tuple of points can be moved to any other $n$-tuple by an element of this group. The group of diffeomorphisms of an orbifold is typically not $n$-transitive: simple obstructions are given by isomorphism classes of isotropy groups of points. In this paper we investigate the transitivity properties of the group of compactly supported diffeomorphisms of orbifolds that are isotopic to the identity through a compactly supported isotopy. We also study an example in the category of area preserving mappings. \end{abstract}

\section{Introduction}

Given a connected manifold $M$ of dimension at least two and a natural number $n$, the group $\diff_{\mathrm c }(M)$ of compactly supported diffeomorphisms of $M$ that are isotopic to the identity through a compactly supported isotopy acts $n$-transitively, meaning that given two $n$-tuples  $(x_1,\ldots,x_n)$ and $(y_1,\ldots,y_n)$ of distinct points in $M$ there exists a diffeomorphism $f\in \diff_{\mathrm c }(M)$ such that $f(x_i)=y_i$ for all $i=1,\ldots,n$. This result is fundamental for many applications in differential topology and holds also when the diffeomorphisms are required to preserve certain structures, e.g. symplectic forms and analytic structures. We refer to~\cite{Michor:1994tg,boothby,banyaga97} for further discussion on this topic.

The orbifold diffeomorphism group was first investigated in~\cite{bb02,bb03}, and it is natural to wonder whether the orbifold diffeomorphism group is $n$-transitive. 
It was observed in \cite[Lemma 5.1]{bb03} that an orbifold diffeomorphism necessarily preserves (up to isomorphism) the isotropy groups of points, hence the group of compactly supported orbifold diffeomorphisms does not act transitively, let alone $n$-transitively. Nevertheless we show that $\diff_{\mathrm c }(\mathcal{O})$, the group of compactly supported orbifold diffeomorphisms that are isotopic to the identity through a compactly supported isotopy, does act transitively on the connected components of each singular stratum.

All the notions which appear in the theorem below will be properly introduced in the next section.

\begin{theorem}[$n$-Transitivity]
  Let $\cO$ be an orbifold. Let $(x_1,\ldots,x_n)$ and $(y_1,\ldots ,y_n)$ be two $n$-tuples of pairwise distinct points.  Assume that for every $i$ the points $x_i$ and $y_i$ lie in the same connected component of the singular stratification. Assume furthermore that each connected component of $\Sigma_1$ contains at most one of the $x_i$'s. Then there exists an orbifold diffeomorphism $f:\cO\rightarrow \cO$, isotopic to the identity through a compactly supported isotopy, such that $f(x_i)=y_i$ for all $1\leq i\leq n$.
\end{theorem}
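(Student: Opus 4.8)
The plan is to argue by induction on $n$, reducing the inductive step to the following local claim: \emph{if $p$ and $q$ lie in the same connected component of a stratum $S$ and $F\subset\cO$ is a finite set disjoint from $\{p,q\}$ such that $p$ and $q$ still lie in the same connected component of $S\setminus F$, then there exists an orbifold diffeomorphism, isotopic to the identity through a compactly supported isotopy, whose support is disjoint from $F$ and which sends $p$ to $q$.} Granting this, the theorem follows. The case $n=1$ is the local claim with $F=\emptyset$. For $n>1$, we would apply the inductive hypothesis to $(x_1,\dots,x_{n-1})$ and $(y_1,\dots,y_{n-1})$ — the required hypotheses are inherited — to obtain $g\in\diff_{\mathrm c}(\cO)$ with $g(x_i)=y_i$ for $i<n$. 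Since an orbifold diffeomorphism maps strata to strata (it preserves the isomorphism class of isotropy groups, cf.~\cite[Lemma~5.1]{bb03}), an orbifold diffeomorphism isotopic to the identity through a compactly supported orbifold isotopy maps each connected component of each stratum to itself; hence $g(x_n)$ lies in the same component of its stratum $S_n$ as $x_n$, and therefore as $y_n$. Moreover $g(x_n)\neq g(x_i)=y_i$ for $i<n$ by injectivity, and $y_n\neq y_i$ for $i<n$, so $F:=\{y_1,\dots,y_{n-1}\}$ is disjoint from $\{g(x_n),y_n\}$. Applying the local claim with $p=g(x_n)$, $q=y_n$ and this $F$ (the remaining connectivity point is discussed below) produces $h$ with support disjoint from $F$ — in particular fixing $y_1,\dots,y_{n-1}$ — and $h(g(x_n))=y_n$; then $f:=h\circ g$ has $f(x_i)=y_i$ for all $i$ and is isotopic to the identity through a compactly supported isotopy.

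For the local claim the idea is as follows. Choose a path $\gamma\colon[0,1]\to S\setminus F$ from $p$ to $q$; its image is compact and disjoint from the closed set $F$. Around each $\gamma(s)$ take an orbifold chart $(\tilde U,\Gamma,\pi)$ centred at $\gamma(s)$ and disjoint from $F$, small enough that $\Gamma=\Gamma_{\gamma(s)}$ acts linearly and orthogonally on $\tilde U\subset\mathbb R^{\dim\cO}$ (Bochner linearization), so that $S\cap\pi(\tilde U)=\pi(V)$ with $V=\tilde U^{\Gamma}$ a linear slice and $\pi^{-1}(x)$ a single point for $x\in S\cap\pi(\tilde U)$. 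Passing to a finite subcover of $\gamma([0,1])$ and using a Lebesgue number, subdivide $0=t_0<\dots<t_m=1$ so that each $\gamma([t_{j-1},t_j])$ lies in one such chart $U_j$, and set $z_j=\gamma(t_j)$. In $U_j$ write $\mathbb R^{\dim\cO}=V_j\oplus W_j$ with $W_j=V_j^{\perp}$, both $\Gamma_j$-invariant, lift $z_{j-1}$ and $z_j$ to their unique preimages $\tilde z_{j-1},\tilde z_j\in V_j$, pick a compactly supported vector field $\eta$ on the slice $V_j$ whose time-one flow carries $\tilde z_{j-1}$ to $\tilde z_j$ — possible because $V_j$ is connected, in particular also when $\dim V_j=1$, since there is no ordering obstruction to moving a single point on a line — fix a cutoff $\rho$ on $W_j$ that is radial and equals $1$ near the origin, and set $\Xi(v,w)=\rho(|w|)\,\eta(v)$. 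Because $\Gamma_j$ fixes $V_j$ pointwise and acts orthogonally on $W_j$, the field $\Xi$ is $\Gamma_j$-equivariant and compactly supported, so its flow is a compactly supported $\Gamma_j$-equivariant isotopy of $\tilde U_j$ which descends to a compactly supported isotopy of $U_j$ through orbifold diffeomorphisms; extending its time-one map by the identity yields an orbifold diffeomorphism $h_j$ with support in $U_j$ (hence disjoint from $F$) and $h_j(z_{j-1})=z_j$. Then $h:=h_m\circ\dots\circ h_1$ sends $p=z_0$ to $z_m=q$, has support disjoint from $F$, and is isotopic to the identity through the compactly supported isotopy obtained by concatenating those of the $h_j$.

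The step requiring the most care — and the only place the hypothesis on $\Sigma_1$ is used — is the connectivity needed to apply the local claim in the inductive step. If $\dim S_n\geq 2$, then $S_n\setminus F$ is connected, since a connected manifold of dimension at least two remains connected after the removal of finitely many points, so $g(x_n)$ and $y_n$ lie in the same component of $S_n\setminus F$. If $\dim S_n\leq 1$, then $S_n\subset\Sigma_1$, so $S_n$ lies in some connected component $C$ of $\Sigma_1$; by hypothesis $C$ contains at most one of the $x_i$, necessarily $x_n$, hence no $x_i$ with $i<n$ lies in $S_n$, and therefore no $y_i$ with $i<n$ lies in $S_n$ either (being in the same stratum component as $x_i$). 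Thus $F\cap S_n=\emptyset$, so $S_n\setminus F=S_n$ is connected and contains both $g(x_n)$ and $y_n$; and when $\dim S_n=0$ one simply has $g(x_n)=y_n$. This is exactly the difference from the manifold case, where the single stratum has dimension at least two and no such hypothesis is needed. The remaining points — that the descended maps are bona fide orbifold diffeomorphisms, and that concatenating the local isotopies yields a compactly supported orbifold isotopy — are routine.
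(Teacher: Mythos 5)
Your argument is correct, but it follows a genuinely different route from the paper. The paper does not induct on $n$: it passes to the configuration orbifold $\cO^{(n)}\subseteq\cO^n$ of pairwise distinct tuples, shows that the relevant stratum component of $\cO^{(n)}$ is connected (this is exactly where the hypothesis on $\Sigma_1$ enters there, via the fact that the fat diagonal has codimension at least two in each factor $Y_i^{n_i}$), and then shows that every $\diff_{\mathrm c}(\cO)$-orbit is open in that component using a local statement (Proposition~\ref{prop:open}): an equivariant compactly supported vector field, obtained by averaging over $\Gamma_x$, whose flow moves $x$ to any nearby point of $\Sigma(x)$. Connectedness plus open orbits gives a single orbit. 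You instead prove a stronger local lemma (move $p$ to $q$ within a stratum component while avoiding a prescribed finite set $F$) by a chain-of-charts argument along a path, with equivariance arranged explicitly via the orthogonal linearized action and a radial cutoff rather than by averaging, and then run an induction on $n$ in which the $\Sigma_1$ hypothesis is used differently: for a one-dimensional stratum component it guarantees $F$ misses that component, so no ordering obstruction arises. Both uses of the hypothesis are legitimate; the paper's configuration-space argument is shorter and avoids tracking the finite set $F$, while your approach is more hands-on and makes the ``avoid previously placed points'' mechanism explicit. One small repair: to conclude that $g\in\diff_{\mathrm c}(\cO)$ maps each connected component of each stratum to itself, preservation of the isomorphism class of isotropy groups is not quite enough (isomorphic isotropy groups can act with fixed-point sets of different dimensions); what you need is that each time-$t$ map of the isotopy preserves the singular dimension, which is precisely Lemma~\ref{Lem:obstruction}, combined with continuity of the track $t\mapsto g_t(x_n)$ inside the stratum. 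Likewise, the identification $S\cap\pi(\tilde U)=\pi(\tilde U^{\Gamma})$ in a linear chart centered on a point of $S$ deserves the one-line justification that the fixed space of the stabilizer of any lift contains $\tilde U^{\Gamma}$ and has dimension $\sdim$, forcing equality; with these details supplied, your proof is complete at the same level of rigor as the paper's.
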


This has also ramifications in other categories of automorphisms: in Section \ref{sec:applications} we consider an example in the category of orbifolds equipped with an area form. We also discuss a setting where $n$-transitivity might give rise to new invariants of orbifold mappings.

\subsection*{Acknowledgements}

T.O. Rot is supported by NWO-NWA startimpuls - 400.17.608. We thank the referees for carefully reading our manuscript and for their helpful comments.

\section{Orbifolds and orbifold maps}
\label{sec:definitions}
\subsection{Orbifold charts}
Let $O$ be a topological space. 
A (smooth) \emph{orbifold chart} is a triple $(\tilde{U},\Gamma_U,\phi_U)$ which consists of
\begin{itemize}
\item an open, connected subset $\tilde{U}\subseteq \mathbb{R}^n$;
\item a finite group $\Gamma_U$ acting smoothly and effectively on $\tilde{U}$;
\item a continuous map $\phi_U$ from $\tilde{U}$ onto an open subset $U\subseteq O$, such that $\phi_U\circ \gamma=\phi_U$ for all $\gamma\in \Gamma_U$, and $\phi_U$ induces a homeomorphism between $\tilde{U}/\Gamma$ and $U$.
\end{itemize}

An \emph{embedding} between two orbifold charts $(\tilde{U},\Gamma_U,\phi_U)$ and $(\tilde{V},\Gamma_V,\phi_V)$ is a smooth embedding $\lambda: \tilde{U}\rightarrow\tilde{V}$ such that $\phi_V\circ \lambda=\phi_U$. In particular, this means $U=\phi_U(\tilde{U})\subseteq V$.

Two orbifold charts $(\tilde{U},\Gamma_U,\phi_U)$ and $(\tilde{V},\Gamma_V,\phi_V)$ such that $U\cap V\neq \emptyset$ are called \emph{locally compatible} if for every $p\in U\cap V$ there exists an orbifold chart $(\tilde{W},\Gamma_W,\phi_W)$ such that $p\in W\subseteq U\cap V$ and $(\tilde{W},\Gamma_W,\phi_W)$ embeds into both $(\tilde{U},\Gamma_U,\phi_U)$ and $(\tilde{V},\Gamma_V,\phi_V)$.

An \emph{orbifold atlas} on $O$ is a collection $\mathcal{U}$ of orbifold charts such that $\mathcal{U}$ covers $O$, and the elements of $\mathcal{U}$ are locally compatible (in the sense of the above definition). An orbifold atlas $\mathcal{U}$ \emph{refines} and orbifold atlas $\mathcal{V}$ if every chart of $\mathcal{U}$ admits an embedding into some chart of $\mathcal{V}$. Two orbifold atlases are \emph{equivalent} if they admit a common refinement. An effective \emph{orbifold} is a paracompact and Hausdorff space $O$ equipped with an orbifold atlas $\mathcal{U}$. Just as in the manifold case, every orbifold atlas lies in a unique maximal atlas and we will always assume our orbifolds to be equipped with a maximal atlas.


We denote the pair $(O,\mathcal{U})$ by $\mathcal{O}$, to distinguish between the orbifold and its underlying topological space $O$.





The Bochner-Cartan linearization theorem shows that we may always choose charts in which the groups $\Gamma_U$ act linearly on $\tilde U=\mR^n$, i.e.~the chart is a representation of $\Gamma_U$. We call such a chart a linear orbifold chart.

\subsection{Singular set and singular dimension}
Let $\mathcal{O}$ be an orbifold and let $x$ be a point in the underlying topological space $O$. 
If $(\tilde{U},\Gamma_U,\phi_U)$ is any orbifold chart such that $x=\phi_U(\tilde{x})\in U$, we define the isotropy group at $x$ to be
\[
\Gamma_x=\{\gamma\in \Gamma_U\,:\, \gamma\cdot \tilde{x}=\tilde{x}\}.
\]
Up to group isomorphism, this definition is independent of the choice of chart and lift. 
 The \emph{singular set} $\Sigma$ of $\mathcal{O}$ consists of the points with non-trivial isotropy group:
\[
\Sigma=\{x\in O\,:\, \Gamma_x\neq 1\}.
\]
The singular set admits a stratification by singular dimension, which we explain now. Each point $x\in O$ is contained in an open neighborhood $U_x$, for which there exists an orbifold chart 
where the group acting is isomorphic to $\Gamma_x$: we call this a \emph{chart centered at $x$} and write $(\tilde{U}_x,\Gamma_x,\phi_x)$ for such a chart. If $(\tilde{U}_x,\Gamma_x,\phi_x)$ is centered at $x$, then there exists a unique $\tilde x\in \tilde U_x$ such that $\phi_x(\tilde x)=x$. The action of $\Gamma_x$ fixes $\tilde x$, and the differential induces an action on $T_{\tilde x} \tilde U_x$. This action fixes a subspace $T_{\tilde x}\tilde U_x^{\Gamma_x}$, cf.~\cite{D2015}. The \emph{singular dimension} of $x$ is defined to be $\sdim(x)=\dim T_{\tilde x}\tilde U_x^{\Gamma_x}$ and does not depend on the choice of orbifold chart. The singular set can be written as the union $\Sigma=\bigcup_{k= 0}^{n-1} \Sigma_k$ of \emph{singular strata}, where $\Sigma_k=\{x\in \Sigma\,\vert\,\sdim(x)=k\}$.

  The following proposition, which is for example proven in \cite[Proposition 3.4]{DThesis}, uses the fact that that the fixed point set of a smooth finite group action on a manifold is a manifold.
  \begin{proposition}
    Let $\cO$ be an orbifold. Then each singular stratum $\Sigma_k$ is naturally a manifold of dimension $k$, whose tangent space at $x\in \Sigma_k$ is modeled on $T_{\tilde x}\tilde U_x^{\Gamma_x}$, where $\tilde U_x$ is a chart centered at $x$ and $\tilde x$ is the lift of $x$ to this chart. 
  \end{proposition}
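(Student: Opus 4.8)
The plan is to produce, around each point of $\Sigma_k$, a homeomorphism onto the fixed-point set of a linear finite group action on an open subset of $\mathbb R^k$, and then to check that orbifold chart changes restrict to smooth transition maps between these local models. By the Bochner--Cartan linearization theorem it suffices to work with linear orbifold charts throughout. So I fix $x\in\Sigma_k$ and a linear chart $(\tilde U_x,\Gamma_x,\phi_x)$ centered at $x$, with lift $\tilde x$; since the action is linear and fixes $\tilde x$, the fixed set $\tilde U_x^{\Gamma_x}=\tilde U_x\cap(\mathbb R^n)^{\Gamma_x}$ is an open subset of the linear subspace $(\mathbb R^n)^{\Gamma_x}$, of dimension $\dim T_{\tilde x}\tilde U_x^{\Gamma_x}=\sdim(x)=k$. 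Thus $\tilde U_x^{\Gamma_x}$ is a $k$-manifold, carrying the continuous surjection $\phi_x|_{\tilde U_x^{\Gamma_x}}$ onto $\phi_x(\tilde U_x^{\Gamma_x})$.

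The key local statement is that $\phi_x$ restricts to a homeomorphism from $\tilde U_x^{\Gamma_x}$ onto $\Sigma_k\cap U_x$. The image lies in $\Sigma_k$: any $\tilde y\in\tilde U_x^{\Gamma_x}$ is fixed by all of $\Gamma_x$, so $(\tilde U_x,\Gamma_x,\phi_x)$ is also a chart centered at $y=\phi_x(\tilde y)$ with $\Gamma_y=\Gamma_x$, whence $\sdim(y)=\dim(\mathbb R^n)^{\Gamma_x}=k$. Conversely, if $\tilde y\in\tilde U_x$ has $\sdim(\phi_x(\tilde y))=k$, put $H=\Gamma_{\tilde y}\subseteq\Gamma_x$; using the equivariance of embeddings between a chart centered at $\phi_x(\tilde y)$ and $\tilde U_x$, one identifies $\sdim(\phi_x(\tilde y))$ with $\dim T_{\tilde y}\tilde U_x^{H}=\dim(\mathbb R^n)^{H}$, so $(\mathbb R^n)^{H}$ has dimension $k$; since $(\mathbb R^n)^{\Gamma_x}\subseteq(\mathbb R^n)^{H}$ and both have dimension $k$ they coincide, and $\tilde y\in(\mathbb R^n)^{H}=(\mathbb R^n)^{\Gamma_x}$ is fixed by $\Gamma_x$, i.e.\ $\tilde y\in\tilde U_x^{\Gamma_x}$. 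Injectivity of $\phi_x$ on $\tilde U_x^{\Gamma_x}$ is immediate, since two fixed points lying in a common $\Gamma_x$-orbit coincide; and continuity of the inverse follows from the fact that $\phi_x$ induces a homeomorphism $\tilde U_x/\Gamma_x\cong U_x$. Hence $\psi_x:=(\phi_x|_{\tilde U_x^{\Gamma_x}})^{-1}\colon\Sigma_k\cap U_x\to\tilde U_x^{\Gamma_x}\subseteq\mathbb R^k$ is a chart for $\Sigma_k$, and $x$ varies over $\Sigma_k$, producing a topological atlas.

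It remains to verify that the transition maps $\psi_{x'}\circ\psi_x^{-1}$ are smooth. Let $p\in\Sigma_k\cap U_x\cap U_{x'}$. By the previous step both $\tilde U_x$ and $\tilde U_{x'}$ are in fact charts centered at $p$, with chart group isomorphic to $\Gamma_p$; and by local compatibility, through an elementary chain of refinements (and one more application of Bochner--Cartan), one may choose a linear chart $(\tilde W,\Gamma_W,\phi_W)$ centered at $p$ together with embeddings $\lambda\colon\tilde W\to\tilde U_x$ and $\lambda'\colon\tilde W\to\tilde U_{x'}$. Since $\Gamma_W\cong\Gamma_p\cong\Gamma_x\cong\Gamma_{x'}$ are finite of equal order, the injective homomorphisms induced by $\lambda$ and $\lambda'$ are isomorphisms onto $\Gamma_x$ and $\Gamma_{x'}$, so $\lambda,\lambda'$ are equivariant open embeddings carrying $\tilde W^{\Gamma_W}$ diffeomorphically onto open subsets of $\tilde U_x^{\Gamma_x}$ and $\tilde U_{x'}^{\Gamma_{x'}}$. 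From $\phi_x\circ\lambda=\phi_W=\phi_{x'}\circ\lambda'$ one reads off $\psi_x^{-1}=\phi_W\circ(\lambda|_{\tilde W^{\Gamma_W}})^{-1}$ and $\psi_{x'}\circ\phi_W|_{\tilde W^{\Gamma_W}}=\lambda'|_{\tilde W^{\Gamma_W}}$ on the relevant domains, hence $\psi_{x'}\circ\psi_x^{-1}=\lambda'|_{\tilde W^{\Gamma_W}}\circ(\lambda|_{\tilde W^{\Gamma_W}})^{-1}$, which is smooth. This gives a smooth atlas; differentiating $\psi_x$ at $x$ gives the isomorphism $T_x\Sigma_k\cong T_{\tilde x}\tilde U_x^{\Gamma_x}$, and it is chart-independent because the transition maps are diffeomorphisms.

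I expect the main obstacle to be exactly the smoothness of the transition maps, because it forces one to control how a common refinement of two orbifold charts interacts with the fixed-point loci. This rests on the standard but slightly delicate fact that an embedding of orbifold charts is equivariant with respect to an injective homomorphism of the chart groups, together with the ability to choose the common refinement centered at the point in question, so that this homomorphism is onto and therefore matches fixed sets with fixed sets. The linear local model and the pointwise check that $\phi_x$ is a homeomorphism onto $\Sigma_k\cap U_x$ are, by contrast, routine once Bochner--Cartan is invoked.
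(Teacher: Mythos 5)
Your argument is correct and is essentially the proof the paper intends: the paper does not prove this proposition itself but cites \cite[Proposition 3.4]{DThesis}, noting that the proof rests on the fact that the fixed point set of a smooth finite group action on a manifold is a manifold, which is exactly the linearized local model (fixed subspace of a linear $\Gamma_x$-action via Bochner--Cartan) that you construct and then glue by checking transition maps. The only ingredients you take as known --- that chart embeddings are equivariant with respect to an injective homomorphism of the chart groups, and that $\sdim$ can be computed in a non-centered chart using the stabilizer of the lift --- are standard facts also implicitly delegated to the reference, so this is a faithful filling-in of the same approach rather than a different route.
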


We denote by $\Sigma(x)$ the connected component of $\Sigma_{\sdim(x)}$ containing $x$. The isomorphism class of the isotropy group $\Gamma_y$  is constant for $y\in \Sigma(x)$, and if $\cO$ is compact there are finitely many connected components in each singular stratum $\Sigma_k$.






Sometimes we need to consider multiple orbifolds at the same time. Whenever confusion might arise, we decorate the symbols for the singular set etc.~with a sub/superscript specifying the orbifold in question. 

\subsection{Orbifold maps}
Given the two orbifolds $\mathcal{O}$ and $\mathcal{P}$, a \emph{smooth orbifold map} between $\mathcal{O}$ and $\mathcal{P}$ is defined by the following data/conditions:
\begin{itemize}
\item a continuous map $f:O\rightarrow P$ between the underlying topological spaces; 
\item for every $x\in O$, there exist charts $(\tilde{U},\Gamma_U,\phi_U)$ and $(\tilde{V},\Gamma_V,\phi_V)$ containing $x$ and $f(x)$, respectively, with the property that $f$ maps $U$ into $V$ and can be lifted to a smooth map $\tilde{f}:\tilde{U}\rightarrow \tilde{V}$ such that $\phi_V\circ \tilde f=f\circ \phi_U$.
\end{itemize}
\begin{remark}
  There are several, non-equivalent definitions of (smooth) orbifold maps in the literature, such as, for instance, good maps~\cite{CROrbifoldGW}, complete orbifold maps~\cite{BB2012}, and strong maps~\cite{moerpronk}. Conveniently for us, these differences tend to disappear for orbifold diffeomorphisms. For instance, every orbifold diffeomorphism as defined below is a regular map, in the sense of \cite{CROrbifoldGW},  so in particular it is a good orbifold map, with a unique isomorphism class of compatible systems. 
\end{remark}

In this work we are dealing with a special type of orbifold maps, namely orbifold diffeomorphisms, which behave in a particularly nice way: for instance, the lifts to local charts are uniquely determined up to multiplication by the local group. We now recall the definition and some basic properties of orbifold diffeomorphisms.
  
An \emph{orbifold diffeomorphism} is a smooth orbifold map $f:\mathcal{O}\rightarrow \mathcal{P}$ such that there exists another smooth orbifold map $g:\mathcal{P}\rightarrow \mathcal{O}$ with $g\circ f = 1_O$ and $f\circ g=1_P$. An orbifold diffeomorphism is a homeomorphism between the underlying topological spaces and the local lifts of an orbifold diffeomorphism are local diffeomorphisms in the usual sense, as we prove below.

\begin{lemma} 
  If $f:\cO\rightarrow \cP$ is an orbifold diffeomorphism and $\tilde f$ is a local smooth lifting of $f$, 
then $\tilde f$ is a local diffeomorphism. 
\end{lemma}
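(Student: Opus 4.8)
The plan is to reduce the statement to the invertibility of the differential of $\tilde f$ at an arbitrary point and to exploit the two-sided smooth inverse $g$ of $f$. Fix a smooth lift $\tilde f\colon \tilde U\to \tilde V$ of $f$ over charts $(\tilde U,\Gamma_U,\phi_U)$ and $(\tilde V,\Gamma_V,\phi_V)$, pick $\tilde x\in \tilde U$, and set $x=\phi_U(\tilde x)$, $y=f(x)$. It is enough to show that $D\tilde f_{\tilde x}$ is a linear isomorphism, since the inverse function theorem then yields the claim locally, and $\tilde x$ is arbitrary.

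First I would arrange compatible charts so that lifts can be composed. Since $g$ is a smooth orbifold map, there are charts around $y$ and around $g(y)=x$ carrying a smooth lift $\tilde g$ of $g$. Using local compatibility of the atlas and shrinking all the charts involved to sufficiently small common refinements, I can assume, after relabelling, that $\tilde f$ restricts to a smooth map $\tilde f\colon \tilde U_0\to\tilde V_0$ and $\tilde g$ restricts to a smooth map $\tilde g\colon\tilde V_0\to\tilde U_1$, where $\tilde U_0,\tilde U_1$ are charts around $x$ and $\tilde V_0$ is a chart around $y$, all intertwining the relevant quotient maps. Then $\phi_{U_1}\circ(\tilde g\circ\tilde f)=\phi_{U_0}$, so $\tilde g\circ\tilde f$ is a smooth lift of the restriction of $1_O$ to a neighbourhood of $x$; symmetrically one builds a composition $\tilde f\circ\tilde g$ lifting a restriction of $1_P$ near $\tilde f(\tilde x)$.

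The heart of the argument is then the claim that a smooth lift of the identity between orbifold charts is automatically a local diffeomorphism — more precisely, that it locally coincides with an orbifold chart embedding followed by an element of the local group. Over the dense open set of points with trivial isotropy this is immediate, since there the quotient maps $\phi$ are local homeomorphisms (indeed covering maps onto their images), so such a lift is locally the composite $\phi_{U_1}^{-1}\circ\phi_{U_0}$ of local diffeomorphisms. I expect extending this across the singular strata to be the main obstacle: here I would invoke the Bochner--Cartan linearization to work in linear charts, observe that on the (possibly disconnected) regular locus the lift is a deck transformation of the $\Gamma$-covering, and use continuity together with the rigidity of a linear finite group action across codimension-one mirror strata to conclude that it extends to a genuine element of $\Gamma_{U_1}$ on the whole chart, hence to a diffeomorphism.

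Granting this, $\tilde g\circ\tilde f$ and $\tilde f\circ\tilde g$ are local diffeomorphisms, so the chain rule forces $D\tilde f_{\tilde x}$ to be injective; as $\tilde x$ was arbitrary, $\tilde f$ is an immersion. Running the symmetric argument with the roles of $f$ and $g$ exchanged shows that $\tilde g$ is an immersion as well, which forces $\dim\tilde U=\dim\tilde V$. An injective differential between spaces of equal dimension is a linear isomorphism, so $D\tilde f_{\tilde x}$ is invertible and $\tilde f$ is a local diffeomorphism.
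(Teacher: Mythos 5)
Your overall strategy is the same as the paper's: compose a lift of $f$ with a lift of its inverse (after shrinking charts and using chart embeddings) to reduce everything to the claim that a smooth lift of the identity of a chart is a local diffeomorphism, then use linear (Bochner--Cartan) charts, density of the regular locus, and continuity. The reduction itself, and the final dimension-count giving invertibility of $D\tilde f_{\tilde x}$, are fine.

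The genuine gap is exactly at the point you flag as ``the main obstacle'': your treatment of the identity-lift across the singular set is an unproved appeal to ``rigidity of a linear finite group action across codimension-one mirror strata,'' and moreover it aims at a statement stronger than what the lemma needs, namely that the lift coincides with a single group element on the whole chart. That stronger statement is true, but it is not free: to make your sketch into a proof you would have to argue that on each connected component of the regular locus the lift equals some fixed $\gamma_i\in\Gamma$ (so its differential is the constant matrix $\gamma_i$), that continuity of the differential at a point of a codimension-one wall adjacent to two components forces $\gamma_1=\gamma_2$ (using linearity and effectiveness, so the matrix determines the group element), and that the complement of the codimension-$\geq 2$ strata is connected, whence the element is globally constant; the paper does not reprove this but quotes it from \cite[Proposition 1.11]{DThesis}, and only as an afterthought. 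For the lemma itself the paper uses a cheaper observation that sidesteps all wall analysis: since the differential of the identity-lift is locally the constant invertible matrix $\gamma$ on the dense regular locus, the continuous function $\det(T\tilde\id)$ takes values in the finite set $\{\det\gamma\,:\,\gamma\in\Gamma_U\}\subset\mathbb{R}\setminus\{0\}$ there, hence is non-vanishing on all of $\tilde U$ by continuity, and the inverse function theorem applies. If you either fill in the wall-crossing argument as above or replace it by this determinant argument, your proof is complete; as written, the decisive step is only asserted. (A minor additional point: on the regular locus the maps $\phi_U$ are local homeomorphisms onto their images, but calling $\phi_{U_1}^{-1}\circ\phi_{U_0}$ a composite of local \emph{diffeomorphisms} presupposes the smooth structure on the regular part coming from chart compatibility; it is cleaner to say directly that near a regular point the lift agrees with the action of a unique group element, which is how the paper phrases it.)
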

\begin{proof}
  Let $(\tilde{U},\Gamma_U,\phi_U)$ be an orbifold chart. Let $\tilde \id:\tilde U\rightarrow \tilde U$ be a smooth lift of the identity $\id:U\rightarrow U$. We claim that $\tilde \id$ is a local diffeomorphism: We can assume that the group $\Gamma_U$ acts linearly on $\tilde U=\mR^n$. Let $x\in U\setminus \Sigma$ be a smooth point and let $\tilde x$ be a lift to $\tilde U$ of $x$. Then there exists a unique $\gamma\in \Gamma_U$ such that $\tilde \id(\tilde x)=\gamma \tilde x$. By continuity there exists an open neighborhood $\tilde V$ of $\tilde x$ such that $\tilde \id(\tilde y)=\gamma \tilde y$ for all $\tilde y \in \tilde V$. Thus $T_{\tilde y}\tilde \id=\gamma$ for $\tilde y\in \tilde V$ and $\det(T_{\tilde y}\tilde \id)=\det \gamma$ is constant and non-vanishing on $\tilde V$. The non-smooth points are open and dense, so it follows from the previous argument that  $\det(T_{\tilde y}\tilde \id)$ is constant and non-vanishing on each connected component of the regular set. By continuity, $\det(T_{\tilde y}\tilde \id)$ is constant and non-vanishing on the whole of $\tilde U$. Thus $\tilde \id$ has an invertible differential and is a local diffeomorphism.
    
    Now suppose $f$ is an orbifold diffeomorphism  and let $g$ be the inverse of $f$. Let $\tilde f$ be a local lift of $f$ around $x\in U$ to orbifold charts $(\tilde{U},\Gamma_U,\phi_U)$ and $(\tilde{V},\Gamma_V,\phi_V)$. Let $\tilde g$ be a local lift of $g$ around $y=f(x)$ to orbifold charts $(\tilde{V}',\Gamma_{V'},\phi_{V'})$ and $(\tilde{U'},\Gamma_{U'},\phi_{U'})$. Note that we may, by shrinking the charts if necessary, assume that there exist chart embeddings $\lambda:\tilde V\rightarrow \tilde V'$ and $\mu:\tilde U'\rightarrow \tilde U$. Then the composition $\mu\circ\tilde g\circ\lambda\circ \tilde f:\tilde U\rightarrow \tilde U$ is a lift of the identity, hence by the first part of the proof it is a local diffeomorphism. The underlying spaces $O$ and $P$ are homeomorphic, thus $\dim(\cO)=\dim(\cP)$. If the composition of smooth maps between manifolds of the same dimension are local diffeomorphisms, then each individual map is a local diffeomorphism. Therefore $\tilde g$ and $\tilde f$ are local diffeomorphisms. In fact, by \cite[Proposition 1.11]{DThesis} it then follows that $\tilde \id(x)=\gamma x$ everywhere. 
\end{proof}

The previous lemma implies in particular that by a suitable choice of orbifold charts we may always assume the diffeomorphism $f$ to lift to a diffeomorphism between the charts. That will be our standing assumption in what follows. 
  
\begin{lemma}
  \label{lem:equiv}
If $f$ is an orbifold diffeomorphism, and $\tilde{f}$ is a smooth lifting of $f$ between the orbifold charts $(\tilde{U},\Gamma_U,\phi_U)$ and $(\tilde{V},\Gamma_V,\phi_V)$, then there is an induced group isomorphism $\Theta: \Gamma_U\rightarrow \Gamma_V$ such that $\tilde{f}(\gamma\cdot \tilde{x})=\Theta(\gamma)\cdot\tilde{f}(\tilde{x})$ for all $\tilde{x}\in \tilde{U}$ (in other words, $\tilde{f}$ is $\Theta$-equivariant). If $\tilde f'$ is another lift of $f$ between the same orbifold charts, then $\tilde f=\mu\circ\tilde f'$ for a unique $\mu\in \Gamma_V$. 
\end{lemma}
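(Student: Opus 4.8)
The plan is to produce $\Theta$ by conjugation. By the standing assumption I may take the lift $\tilde f\colon\tilde U\to\tilde V$ to be an honest diffeomorphism, so $\tilde f^{-1}$ makes sense. Fix $\gamma\in\Gamma_U$ and set $\tilde h_\gamma:=\tilde f\circ\gamma\circ\tilde f^{-1}\colon\tilde V\to\tilde V$, a diffeomorphism of $\tilde V$. Using $\phi_U\circ\gamma=\phi_U$ and the relation $\phi_U\circ\tilde f^{-1}=f^{-1}\circ\phi_V$ on $\tilde V$ (which follows at once from $\phi_V\circ\tilde f=f\circ\phi_U$), one computes $\phi_V\circ\tilde h_\gamma=f\circ\phi_U\circ\gamma\circ\tilde f^{-1}=f\circ\phi_U\circ\tilde f^{-1}=f\circ f^{-1}\circ\phi_V=\phi_V$, so $\tilde h_\gamma$ is a smooth lift of $\id_V$ to the chart $(\tilde V,\Gamma_V,\phi_V)$. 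By the preceding lemma and its proof — more precisely by \cite[Proposition 1.11]{DThesis}, which identifies a self-lift of the identity of a chart with the action of a single group element — there is a unique $\mu\in\Gamma_V$ with $\tilde h_\gamma=\mu$; I set $\Theta(\gamma):=\mu$. Unwinding the definition of $\tilde h_\gamma$, this says precisely $\tilde f(\gamma\cdot\tilde x)=\Theta(\gamma)\cdot\tilde f(\tilde x)$ for all $\tilde x\in\tilde U$.

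Next I would verify that $\Theta$ is a group isomorphism. Multiplicativity is immediate, since $\tilde h_{\gamma_1\gamma_2}=\tilde h_{\gamma_1}\circ\tilde h_{\gamma_2}$ and the realizing element is unique, whence $\Theta(\gamma_1\gamma_2)=\Theta(\gamma_1)\Theta(\gamma_2)$. If $\Theta(\gamma)=1$ then $\tilde f\circ\gamma\circ\tilde f^{-1}=\id_{\tilde V}$, so $\gamma=\id_{\tilde U}$, hence $\gamma=1$ because $\Gamma_U$ acts effectively; thus $\Theta$ is injective. For surjectivity, given $\mu\in\Gamma_V$ the map $\tilde f^{-1}\circ\mu\circ\tilde f\colon\tilde U\to\tilde U$ is, by the same computation with the roles of the two charts reversed and using $\phi_V\circ\mu=\phi_V$, a smooth lift of $\id_U$, hence equals a unique $\gamma\in\Gamma_U$; this $\gamma$ satisfies $\Theta(\gamma)=\mu$. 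So $\Theta$ is a bijective homomorphism.

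Finally, for the uniqueness of the lift up to $\Gamma_V$, let $\tilde f'\colon\tilde U\to\tilde V$ be another smooth lift of $f$ between the same charts and put $\tilde h:=\tilde f^{-1}\circ\tilde f'\colon\tilde U\to\tilde U$. Then $\phi_U\circ\tilde h=\phi_U\circ\tilde f^{-1}\circ\tilde f'=f^{-1}\circ\phi_V\circ\tilde f'=f^{-1}\circ f\circ\phi_U=\phi_U$, so $\tilde h$ is a lift of $\id_U$ and therefore equals a unique $\gamma\in\Gamma_U$. Hence $\tilde f'=\tilde f\circ\gamma=(\tilde f\circ\gamma\circ\tilde f^{-1})\circ\tilde f=\Theta(\gamma)\circ\tilde f$, which gives $\tilde f=\mu\circ\tilde f'$ with $\mu:=\Theta(\gamma)^{-1}\in\Gamma_V$; in particular $\tilde f'$ is automatically a diffeomorphism. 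Uniqueness of $\mu$ follows because $\tilde f'=\tilde f\circ\gamma$ is onto $\tilde V$: if $\mu\circ\tilde f'=\mu'\circ\tilde f'$ then $\mu=\mu'$ as self-maps of $\tilde V$, hence $\mu=\mu'$ in $\Gamma_V$ by effectiveness.

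I do not anticipate a genuine obstacle; the only delicate point, and the heart of the matter, is the rigidity statement that a smooth lift of the identity map of a chart is globally the action of a \emph{single} group element — this is false for arbitrary smooth self-maps but holds here, being supplied by the preceding lemma and ultimately resting on the availability of linear (Bochner--Cartan) charts. Everything else is formal bookkeeping with the intertwining relations $\phi_V\circ\tilde f=f\circ\phi_U$ and $\phi_U\circ\gamma=\phi_U$ (and its analogue for $V$).
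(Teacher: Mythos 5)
Your proposal is correct and follows essentially the same route as the paper: define $\Theta(\gamma)$ by conjugation $\tilde f\circ\gamma\circ\tilde f^{-1}$, verify it lifts $\id_V$, and invoke the rigidity of lifts of the identity (Proposition 1.11 of the cited thesis) both for this and for comparing two lifts. The only cosmetic differences are that you spell out the isomorphism verification (which the paper leaves as ``by construction'') and compare lifts via $\tilde f^{-1}\circ\tilde f'$ rather than $\tilde f\circ(\tilde f')^{-1}$, which conveniently avoids assuming $\tilde f'$ invertible from the start.
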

\begin{proof}
We can construct a map between the groups associated to the orbifold charts in the following way: given $\gamma\in \Gamma_U$, the composition $\tilde{f}\circ \gamma\circ \tilde{f}^{-1}$ is a diffeomorphism of $\tilde{V}$. Moreover, we have that 
\[
\phi_V\circ(\tilde{f}\circ \gamma\circ \tilde{f}^{-1})=f\circ \phi_U\circ \gamma\circ \tilde{f}^{-1}=f\circ \phi_U\circ \tilde{f}^{-1}=\phi_V\circ \tilde{f}\circ \tilde{f}^{-1}=\phi_V,
\] 
so by \cite[Proposition 1.11]{DThesis} it follows that there exists a unique element $\mu\in \Gamma_V$ such that $\tilde{f}\circ \gamma\circ \tilde{f}^{-1}=\mu$. We set $\Theta(\gamma)=\mu$, and this defines a map $\Theta: \Gamma_U\rightarrow \Gamma_V$, which by construction is a group isomorphism.

We note that $\tilde f\circ (\tilde f')^{-1}$ is a lift of the identity on $V$, and again by \cite[Proposition 1.11]{DThesis}, we have that there exists a unique $\mu\in \Gamma_V$ such that $\tilde f\circ (\tilde f')^{-1}=\mu$, i.e.~$\tilde f=\mu\circ \tilde f'$.
\end{proof}


The \emph{support} of a smooth orbifold diffeomorphism $f:\cO\rightarrow \cO$ is the set
  \[
\mathrm{supp}(f)=\overline{\{x\in O\,|\, f(x)\neq x\}}.
  \]

In order to define homotopies of orbifold maps, we need to consider the product orbifold structure on $\mathcal{O}\times [0,1]$ (\cite{BB2008}): its singular set consists of points of the form $(x,t)$, with $x\in \Sigma$ and $t\in [0,1]$, and for all such points the isotropy group $\Gamma_{(x,t)}$ is isomorphic to $\Gamma_x$. The time $t$ inclusion $\mathcal{i}_t:\cO\rightarrow \cO\times [0,1]$ is a smooth orbifold map.

Two orbifold maps $f,g: \mathcal{O}\rightarrow \mathcal{P}$ are called \emph{smoothly homotopic} if there exists a smooth orbifold map $F:\, \mathcal{O}\times [0,1]\rightarrow \mathcal{P}$ such that 
\[
F\mathcal{i}_0=f\quad \textrm{and}\quad F\mathcal{i}_1=g. 
\]
Two orbifold diffeomorphisms are \emph{isotopic} if they are homotopic through orbifold diffeomorphisms. We denote by $\diff_{\mathrm c }(\cO)$ the group of compactly supported diffeomorphisms of the orbifold $\cO$ that are isotopic to the identity through a compactly supported isotopy.


\section{Transitivity}

In this section we will prove our main result. We start with the observation that an orbifold diffeomorphism preserves the singular dimension.

\begin{lemma}\label{Lem:obstruction}
Suppose $f:\cO\rightarrow \cP$ is an orbifold diffeomorphism: then for every $x\in O$ we have  that $\sdim(x)=\sdim(f(x))$.
\end{lemma}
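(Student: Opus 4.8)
The plan is to reduce the statement to a linear-algebra fact about the differential of a local lift of $f$. First I would fix $x\in O$ and choose an orbifold chart $(\tilde U,\Gamma_x,\phi_U)$ centered at $x$ together with an orbifold chart $(\tilde V,\Gamma_{f(x)},\phi_V)$ centered at $f(x)$, arranged (using our standing assumption after shrinking $\tilde U$ equivariantly around the lift $\tilde x$ of $x$) so that $f$ lifts to a diffeomorphism onto its image $\tilde f\colon\tilde U\to\tilde V$, and (by the Bochner--Cartan linearization theorem) so that $\Gamma_x$ acts linearly on $\tilde U$ and $\Gamma_{f(x)}$ acts linearly on $\tilde V$. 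Since $\phi_V(\tilde f(\tilde x))=f(\phi_U(\tilde x))=f(x)$ and $\tilde V$ is centered at $f(x)$, the point $\tilde y:=\tilde f(\tilde x)$ is exactly the distinguished lift of $f(x)$ used in the definition of $\sdim(f(x))$, and both $\tilde x$ and $\tilde y$ are fixed by the respective group actions.

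Next I would invoke Lemma~\ref{lem:equiv} to obtain a group isomorphism $\Theta\colon\Gamma_x\to\Gamma_{f(x)}$ with $\tilde f(\gamma\cdot\tilde z)=\Theta(\gamma)\cdot\tilde f(\tilde z)$ for every $\tilde z\in\tilde U$ and every $\gamma\in\Gamma_x$, and differentiate this identity at $\tilde z=\tilde x$. Writing $A:=T_{\tilde x}\tilde f\colon T_{\tilde x}\tilde U\to T_{\tilde y}\tilde V$ for the resulting linear isomorphism and using that in a linear chart each $\gamma$ agrees with its own differential while fixing the relevant basepoint, the chain rule gives
\[
A\circ\gamma=\Theta(\gamma)\circ A\qquad\text{for all }\gamma\in\Gamma_x,
\]
so that $A$ intertwines the linear $\Gamma_x$-action on $T_{\tilde x}\tilde U$ with the linear $\Gamma_{f(x)}$-action on $T_{\tilde y}\tilde V$ via $\Theta$.

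Finally I would note that such an $A$ carries the fixed subspace $T_{\tilde x}\tilde U^{\Gamma_x}$ onto $T_{\tilde y}\tilde V^{\Gamma_{f(x)}}$: if $v$ is fixed by all of $\Gamma_x$ then $\Theta(\gamma)(Av)=A(\gamma v)=Av$ for every $\gamma$, and surjectivity of $\Theta$ shows $Av$ is $\Gamma_{f(x)}$-fixed; applying the same argument to the equivariant isomorphism $A^{-1}$ and $\Theta^{-1}$ gives the reverse inclusion. Comparing dimensions then yields
\[
\sdim(x)=\dim T_{\tilde x}\tilde U^{\Gamma_x}=\dim T_{\tilde y}\tilde V^{\Gamma_{f(x)}}=\sdim(f(x)).
\]
I expect the only delicate point to be in the first step — choosing the two centered linear charts so that $f$ still lifts to a diffeomorphism between them, and recognizing $\tilde f(\tilde x)$ as the basepoint used to compute $\sdim(f(x))$ — after which the proof is a routine equivariance computation with no genuinely hard obstacle.
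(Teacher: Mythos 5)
Your proposal is correct and follows essentially the same route as the paper: differentiate the $\Theta$-equivariance from Lemma~\ref{lem:equiv} at the lifted basepoint to see that $T_{\tilde x}\tilde f$ carries the $\Gamma_x$-fixed subspace into the $\Gamma_{f(x)}$-fixed subspace, then obtain the reverse inclusion from the inverse (the paper phrases this as applying the same argument to $f^{-1}$, while you use $A^{-1}$ and $\Theta^{-1}$ directly — an inessential difference).
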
 
\begin{proof}
  The isotropy group $\Gamma_x$ acts on $T_{\tilde{x}}\tilde U_x$ via the differential. We denote this action by $\gamma\cdot \tilde X$, where $\gamma\in \Gamma_x$ and $\tilde X\in T_{\tilde{x}}\tilde U_x$. If $\tilde X\in T_{\tilde x}\tilde U_x^{\Gamma_x}$, then $\gamma\cdot \tilde X=\tilde X$ and by the equivariance proved in Lemma~\ref{lem:equiv} we find that $\Theta_x(\gamma)\cdot T_{\tilde x}\tilde f_x(\tilde X)=T_{\tilde x}\tilde f_x(\gamma\cdot \tilde X)=T_{\tilde x}\tilde f_x(x) \tilde X$. Since the map $\Theta_x:\Gamma_x\rightarrow \Gamma_{f(x)}$ is a group isomorphism, every $\mu\in \Gamma_{f(x)}$ is of the form $\Theta_x(\gamma)$ for some $\gamma\in \Gamma_x$. Thus $\mu\cdot T_{\tilde x}\tilde f_x\tilde X=T_{\tilde x}\tilde f_x(\tilde X)$ for all $\mu\in \Gamma_{f(x)}$.  We conclude that $T_{\tilde x}\tilde f_x (T_{\tilde x} \tilde U_x^{\Gamma_x})\subset T_{\tilde f_x(\tilde x)}\tilde U_{f(x)}^{\Gamma_{f(x)}}$, which implies that $\sdim(x)\leq \sdim(f(x))$. The same argument applied to the inverse map $f^{-1}$ then shows that $\sdim(f(x))\leq \sdim(x)$, hence $\sdim(x)=\sdim(f(x))$. 
\end{proof}

The next proposition is the crucial step in the proof of our result: it shows that locally we can carry a given point $x$ to any sufficiently close point lying on the same connected component by a diffeomorphism which is isotopic to the identity via an isotopy having compact support in a prescribed open neighborhood of $x$.

\begin{proposition}
  \label{prop:open}
  Let $\cO$ be an orbifold, and let $U$ be an open neighborhood of a point $x\in O$. Then there exists an open subset $V$ of $\Sigma(x)$, such that $x\in V$, $V\subseteq U$, and for each $y\in V$ there exists $f\in \diff_{\mathrm c}(\cO)$ such that $f(x)=y$ and $f$ is isotopic to the identity via an isotopy with compact support contained in $U$. 
\end{proposition}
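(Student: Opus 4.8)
The plan is to work entirely in a single linear orbifold chart centered at $x$ and to lift the problem to an equivariant problem on $\mathbb{R}^n$. First I would invoke the Bochner--Cartan linearization to pick a chart $(\tilde U_x,\Gamma_x,\phi_x)$ centered at $x$ with $\tilde U_x\subseteq\mathbb{R}^n$ an open $\Gamma_x$-invariant set on which $\Gamma_x$ acts linearly, shrinking so that $\phi_x(\tilde U_x)\subseteq U$. By the Proposition on strata, $\Sigma(x)$ corresponds near $x$ to (an open piece of) the fixed subspace $T_{\tilde x}\tilde U_x^{\Gamma_x}=(\mathbb{R}^n)^{\Gamma_x}$, and $\tilde x$ is the unique lift of $x$; after translating we may assume $\tilde x=0$. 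Given a target point $y\in\Sigma(x)$ close to $x$, its lift $\tilde y$ lies in $(\mathbb{R}^n)^{\Gamma_x}$, so the translation $\tilde X\mapsto \tilde X+\tilde y$ is a $\Gamma_x$-equivariant diffeomorphism (it commutes with each linear $\gamma$ precisely because $\tilde y$ is $\Gamma_x$-fixed) carrying $0$ to $\tilde y$.

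Next I would cut this translation off to make it compactly supported inside $\tilde U_x$ while keeping it equivariant. Choose a small ball $B_r(0)$ with $\overline{B_{2r}(0)}\subseteq \tilde U_x$; since $\Gamma_x$ acts orthogonally (the linear action preserves some invariant inner product), balls centered at $0$ are $\Gamma_x$-invariant, and we may take the target $\tilde y$ to lie in $B_r(0)\cap(\mathbb{R}^n)^{\Gamma_x}$. Pick a smooth $\Gamma_x$-invariant (indeed radial) bump function $\beta:\mathbb{R}^n\to[0,1]$ equal to $1$ on $B_r(0)$ and supported in $B_{2r}(0)$, and consider the time-one flow of the vector field $\tilde X\mapsto \beta(\tilde X)\,\tilde y$ (a constant-direction, compactly supported field). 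Because $\tilde y$ is $\Gamma_x$-fixed and $\beta$ is $\Gamma_x$-invariant, this vector field is $\Gamma_x$-equivariant, so its flow $\tilde\psi_t$ is a $\Gamma_x$-equivariant isotopy of $\tilde U_x$, compactly supported in $B_{2r}(0)$, with $\tilde\psi_0=\id$ and $\tilde\psi_1(0)=\tilde y$ provided $\tilde y$ is small enough that the integral curve through $0$ stays inside the region where $\beta\equiv 1$ (this is where we shrink $V$: take $V=\phi_x\big(B_\varepsilon(0)\cap(\mathbb{R}^n)^{\Gamma_x}\big)$ for suitable $\varepsilon\le r$).

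Finally I would descend: an equivariant, compactly supported isotopy $\tilde\psi_t$ of $\tilde U_x$ induces a well-defined orbifold isotopy $\psi_t$ of $\phi_x(\tilde U_x)$ (it respects the fibers of $\phi_x$), which extends by the identity to a compactly supported orbifold isotopy of all of $\cO$ since $\tilde\psi_t$ is the identity near $\partial \tilde U_x$; the support of $\psi_t$ lies in $\phi_x(B_{2r}(0))\subseteq U$. Setting $f=\psi_1$ gives $f\in\diff_{\mathrm c}(\cO)$ with $f(x)=y$ and $f$ isotopic to the identity through a compactly supported isotopy inside $U$. The main obstacle is the bookkeeping at the descent step: one must check carefully that an equivariant map downstairs is genuinely a smooth orbifold map in the sense of Section~\ref{sec:definitions} and that the isotopy glues with the identity across chart boundaries to give an honest element of $\diff_{\mathrm c}(\cO)$; the equivariance of every construction upstairs (ensured by using only radial bump functions and the $\Gamma_x$-fixed vector $\tilde y$) is exactly what makes this routine rather than delicate, and the smallness condition on $V$ is forced only by the need to keep the flowline through $0$ inside the region where the bump function is constant.
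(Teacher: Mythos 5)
Your proposal is correct and follows essentially the same strategy as the paper: pass to a linearized chart centered at $x$, construct a compactly supported $\Gamma_x$-equivariant vector field whose time-one flow carries $\tilde x$ to $\tilde y$, then descend to the orbifold and extend by the identity outside the chart. The only difference is minor: the paper starts from an arbitrary compactly supported field on the fixed locus and makes it equivariant by averaging over $\Gamma_x$ (thereby obtaining the whole component of the fixed locus in the chart as $V$), whereas you use the constant field $\beta(\cdot)\,\tilde y$ cut off by an invariant radial bump, which is equivariant by construction but yields only a small ball in the fixed subspace as $V$ --- still sufficient for the statement.
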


\begin{proof}
  Choose an orbifold chart $(\tilde U_x=\mathbb{R}^n,\Gamma_x,\phi_x)$ centered around $x$, and such that $U_x\subseteq U$. Let $\tilde{x}\in\tilde{U}_x$ be the unique point such that $\phi_x(\tilde{x})=x$, and let $\tilde V\subseteq \tilde U_x$ be the connected component of the fixed set of $\Gamma_x$ containing $\tilde x$. Define $V=\phi_x(\tilde V)$: the restriction of the map $\phi_x$ to $\tilde V$ is injective, and $\phi_x(\tilde V)\subseteq\Sigma(x)\cap U_x$. Let $\tilde y\in \tilde V$. We want to show that that exists a diffeomorphism $\tilde f$ of $\tilde U_x$ which satisfies the following three conditions: 
  \begin{itemize}
\item[(i)]  $\tilde f$ is equivariant with respect to the action of $\Gamma_x$;
\item[(ii)] $\tilde f$ is isotopic to the identity, via equivariant diffeomorphisms compactly supported in $U$;
\item[(iii)] $\tilde f(\tilde x)=\tilde y$.
\end{itemize}

Since $\tilde V$ is a connected manifold, there exists a vector field $\tilde Y$ on $\tilde V$, which is compactly supported\footnote{A vector field is compactly supported if it vanishes outside a compact set} and whose time-one flow maps $\tilde x$ to $\tilde y$. We can extend $\tilde Y$ to a compactly supported vector field defined everywhere on $\tilde U_x$, and we will denote this vector field by the same symbol. Recall that the action of $\Gamma_x$ on $\tilde U_x$ induces a linearized action on $T\tilde U_x$, which we denote by $\gamma\cdot \tilde X$, with $\gamma\in \Gamma_x$ and $\tilde X\in T\tilde U_x$. Note that $\gamma\cdot:T_{\tilde z}\tilde U_x\rightarrow T_{\gamma\tilde z}\tilde U_x$ and, as $\tilde Y$ is tangent to $\tilde V$ and the action fixes $\tilde V$, that $\gamma\cdot \tilde Y(\tilde z)=\tilde Y(\tilde z)$ for all $\tilde z\in \tilde V$. We need to average over the action of the group $\Gamma_x$ in order to make our vector field equivariant, that is, for all $\tilde z \in \tilde U_x$ we define $\tilde X(\tilde z)$ via the formula:
\[
\tilde X(\tilde z)=\frac{1}{\vert \Gamma_x\vert}\sum_{\gamma\in \Gamma_x}\gamma\cdot \tilde Y(\gamma^{-1} \tilde z).
\]
 Since $\tilde Y$ is compactly supported, and $\Gamma_x$ is finite, the vector field $\tilde X$ is compactly supported. Then we check that, for $\mu\in \Gamma_x$, 
\begin{align*}
  \mu\cdot \tilde X(\tilde z)&=\frac{1}{\vert \Gamma_x\vert}\sum_{\gamma\in \Gamma_x} \mu\cdot (\gamma \cdot \tilde Y(\gamma^{-1} \tilde z))\\
  &=\frac{1}{\vert \Gamma_x\vert}\sum_{\gamma\in \Gamma_x} (\mu \gamma)\cdot \tilde Y((\mu \gamma)^{-1}\mu \tilde z)=\frac{1}{\vert \Gamma_x\vert}\sum_{\gamma '\in \Gamma_x} \gamma '\cdot \tilde Y(\gamma'^{-1} \mu \tilde z)=\tilde X(\mu \tilde z),
\end{align*}
i.e. $\tilde X$ is indeed equivariant with respect to the action of $\Gamma_x$. Note that for $\tilde z\in \tilde V$ we have that $\gamma\cdot X(\tilde z)=\tilde Y(\tilde z)$, hence the flow of $\tilde X$ agrees with that of $\tilde Y$ when restricted to $\tilde V$. In particular, the time-one flow $\tilde f$ of $\tilde X$ maps $\tilde x$ to $\tilde y$. Clearly the projection of $\tilde f$ to $U$ can be extended (by declaring it to be the identity outside of $U$) to an orbifold diffeomorphism $f$, which is isotopic to the identity through an isotopy with support in $U$. By construction, $f$ maps $x$ to $y=\phi_x(\tilde y)$. 
\end{proof}

We can now move on to the proof of our main result.

\begin{theorem}[Transitivity]
  \label{thm:ktransitivity}
  Let $\cO$ be a connected orbifold. Let $(x_1,\ldots,x_n)$ and $(y_1,\ldots ,y_n)$ be two $n$-tuples of pairwise distinct points.  Assume that for every $i$ the points $x_i$ and $y_i$ lie in the same connected component of the singular stratification. Assume furthermore that each connected component of $\Sigma_1$ contains at most one of the $x_i$'s. Then there exists an orbifold diffeomorphism $f:\cO\rightarrow \cO$, isotopic to the identity through a compactly supported isotopy, such that $f(x_i)=y_i$ for all $1\leq i\leq n$.
  \end{theorem}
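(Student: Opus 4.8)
The plan is to reduce the $n$-point statement to the local result of Proposition~\ref{prop:open} by a connectedness argument, handled one point at a time, while keeping the supports of the successive diffeomorphisms disjoint from the remaining marked points. First I would observe that, since $\cO$ is connected, the underlying space $O$ is path-connected, and moreover each singular stratum $\Sigma(x_i)$ is a connected manifold (by the Proposition after the definition of singular strata); thus for each $i$ there is a path from $x_i$ to $y_i$ lying entirely in $\Sigma(x_i)$ (and, for $i$ with $x_i$ a regular point, a path in the regular set $O\setminus\Sigma$, which is connected and of dimension $\geq 2$). The key point of Proposition~\ref{prop:open} is that it lets us ``follow'' such a path by a compactly supported isotopy: if $z$ and $z'$ lie in the same connected component of $\Sigma(x_i)$ and $U$ is any open neighbourhood of $z$, then provided $z'$ is close enough to $z$ (inside the neighbourhood $V$ produced by the proposition) there is $f\in\diff_{\mathrm c}(\cO)$, isotopic to the identity with support in $U$, carrying $z$ to $z'$.

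Next I would make this into a ``move one point along a path, avoiding the others'' lemma. Fix $i$ and a path $\sigma:[0,1]\to\Sigma(x_i)$ with $\sigma(0)=x_i$, $\sigma(1)=y_i$. Because the points under consideration are pairwise distinct and $O$ is Hausdorff, we can choose, for each $t$, an open neighbourhood $U_t$ of $\sigma(t)$ disjoint from all the other currently relevant marked points; by compactness of $[0,1]$ and Proposition~\ref{prop:open} applied finitely many times, we chain together finitely many diffeomorphisms $f_1,\dots,f_m$, each supported in one such $U_{t_j}$, whose composition carries $x_i$ to $y_i$ and is the identity near all the other marked points. Composing such moves for $i=1,\dots,n$ in turn would complete the argument — except for the bookkeeping issue that moving the $i$-th point might have to pass through a region currently containing some $x_j$ or $y_j$. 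This is where the dimension hypotheses enter: a regular point can be moved along a path avoiding any finite set because the regular set has dimension $\geq 2$; a point on $\Sigma_k$ with $k\geq 2$ can be moved along a path inside its stratum avoiding the finitely many other marked points lying in that same stratum, again because a connected manifold of dimension $\geq 2$ stays connected after removing a finite set; and a point on a $0$-dimensional stratum is isolated there, so $x_i=y_i$ automatically and nothing needs to be done. The genuinely delicate case is $\Sigma_1$: a connected $1$-manifold disconnects when a point is removed, so one cannot in general slide $x_i$ past another marked point sitting on the same component of $\Sigma_1$. This is precisely what the hypothesis ``each connected component of $\Sigma_1$ contains at most one of the $x_i$'s'' is there to rule out; I would use it to guarantee that when we move $x_i$ along its component of $\Sigma_1$ towards $y_i$, no other $x_j$ lies on that component to obstruct the path — and a $y_j$ on that component can be finessed by first moving $x_i$ to $y_i$ (which may displace $y_j$) and handling the order of operations so that each point, once placed, is never on a $1$-dimensional stratum through which a later point must travel.

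Concretely, I would organise the induction as follows: handle the points lying on $1$-dimensional strata first. For such an $x_i$, the hypothesis says its component $C$ of $\Sigma_1$ contains no other $x_j$; choose a path in $C$ from $x_i$ to $y_i$, and shrink the neighbourhoods $U_t$ along it so they avoid all $x_j$ ($j\neq i$) — possible since none of those lie on $C$, and the $U_t$ can be taken thin enough transverse to $C$ to miss any $x_j$ or previously-placed point near $C$ but off it. After all $\Sigma_1$-points are placed, their images $y_i$ are fixed and we never touch $\Sigma_1$ again. Then handle the regular points and the points on strata of dimension $\geq 2$: for each, pick a path inside the appropriate connected set (the regular set, resp. the stratum) from $x_i$ to $y_i$ avoiding the finite set of all other marked points and all already-placed images — possible by the codimension/dimension count above — and again chain Proposition~\ref{prop:open} along it. Finally, compose all the diffeomorphisms produced; the composition is in $\diff_{\mathrm c}(\cO)$, is isotopic to the identity through a compactly supported isotopy (composition of such isotopies), and sends $x_i\mapsto y_i$ for every $i$.

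**Main obstacle.** The hard part is the $\Sigma_1$ case and, more generally, the disjointness bookkeeping: ensuring that the neighbourhoods $U_t$ used to follow the path of one point can be chosen to avoid all the other marked points simultaneously, and choosing the order in which the $n$ points are moved so that a point already sitting at its target is never in the way of a later point's path. The dimension hypotheses are exactly the inputs that make this possible (``remove finitely many points from a manifold of dimension $\geq 2$ and it stays connected'' for the easy strata, ``at most one $x_i$ per component of $\Sigma_1$'' for the hard stratum), so the real content of the proof is verifying that these hypotheses suffice to carry out the avoidance at every step, rather than any hard analysis — the analysis is entirely contained in Proposition~\ref{prop:open}.
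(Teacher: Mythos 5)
Your proposal is correct in substance, and it relies on exactly the same local engine as the paper (Proposition~\ref{prop:open}), but it organizes the global step differently. You move one marked point at a time along a path in its stratum component, chaining the proposition via a compactness/open--closed argument, and you do the avoidance bookkeeping by hand: supports chosen to miss the other marked points, strata of dimension $\geq 2$ (and the regular part) staying connected after removing finitely many points, $\Sigma_0$-points being automatically fixed, and the $\Sigma_1$-hypothesis guaranteeing that no other marked point blocks a one-dimensional path -- note, by the way, that your worry about a $y_j$ lying on the same component $C\subseteq\Sigma_1$ as $x_i$ is vacuous: $y_j\in C$ would force $x_j\in C$, contradicting the hypothesis, so no ordering finesse is needed there. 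The paper instead works once and for all on the configuration space: it considers the diagonal action of $\diff_{\mathrm c}(\cO)$ on the open suborbifold $\cO^{(n)}\subseteq\cO^n$ of pairwise distinct tuples, identifies the relevant stratum of $\cO^{(n)}$ as $Y_1^{n_1}\times\cdots\times Y_k^{n_k}$ minus the fat diagonals, proves this set is connected (here the $\Sigma_1$-hypothesis enters as the statement that each fat diagonal is empty or of codimension $\geq 2$), and shows every orbit is open by applying Proposition~\ref{prop:open} simultaneously in disjoint neighbourhoods of the $n$ coordinates; one connected space with open orbits has a single orbit. The trade-off is clear: your route is more elementary and explicit but requires the ordering and avoidance bookkeeping you flag as the main obstacle (which does go through as you sketch it), whereas the paper's configuration-space formulation absorbs all of that bookkeeping into a single connectedness computation and avoids choosing any order in which to move the points.
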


  \begin{proof}
We will start by reordering the $x_i$'s and $y_i$'s according to which stratum they belong to. Up to a permutation, we may assume that there exist numbers $1=l_1<l_2<\ldots<l_k\leq n$ such that for all $i\leq k-1$ one has $x_j\in \Sigma^\cO(x_{l_i})=:Y_i$ if and only if $l_i\leq j\leq l_{i+1}-1$, and $x_j\in \Sigma^\cO(x_{l_k})=:Y_k$ if and only if $j\geq l_{k}$ (and analogously for the $y_i$'s). Consider the $n$-fold cartesian product $\cO^n=\cO\times \ldots \times \cO$ and the (open) suborbifold $\cO^{(n)}\subseteq \cO^n$, whose underlying space consists of $n$-tuples of points which are pairwise distinct, i.e.
$$
O^{(n)}=\{(x_1,\ldots,x_n)\in O^n\,|\, x_i\not=x_j\quad \text{if}\quad i\not=j\}. 
$$
Let $n_i=l_{i+1}-l_{i}$ for all $1\leq i\leq k-1$ and $n_k=n-l_k+1$. Then both $(x_1,\ldots,x_n)$ and $(y_1,\ldots ,y_n)$ lie in the singular stratum $\Sigma^{\cO^n}(x_1,\ldots,x_n)=Y_1^{n_1}\times \ldots\times Y_k^{n_k}$. We claim that the following identity holds:   \begin{equation}
  \label{eq:connected}
  \Sigma^{\cO^{(n)}}(x_1,\ldots, x_n)=\Sigma^{\cO^n}(x_1,\ldots,x_n)\cap \cO^{(n)}.
\end{equation}
The inclusion of the left hand side in the right hand side is immediate, and for the inclusion of the right hand side in the left hand side we should check that $\Sigma^{\cO^n}(x_1,\ldots,x_n)\cap \cO^{(n)}$ is connected. The fat diagonal in $Y_i^{n_i}$ is defined by
\[
  \Delta_i=\{(z_{1},\ldots, z_{n_i})\in Y_i^{n_i}\,| z_j=z_k \text{ for some } j\not =k \}.
\]
By our assumption on $\Sigma_1$, the fat diagonal $\Delta_i$ is either empty or a union of submanifolds of $Y_i^{n_i}$ of codimension greater than or equal to two in $Y_i^{n_i}$. Thus $Y_i^{n_i}\setminus \Delta_i$ is connected as $Y_i^{n_i}$ is connected. By a similar argument \[
  \Sigma^{\cO^n}(x_1,\ldots,x_n)\cap \cO^{(n)}= Y_1^{n_1}\times \ldots \times Y_k^{n_k}\setminus \bigcup_{i=1}^k\left(Y_1^{n_1}\times \ldots \times \Delta_i\times\ldots Y_k^{n_k}\right)\]
is connected and~\eqref{eq:connected} is true.

The group $\diff_{\mathrm c}(\cO)$ acts on $\cO^{(n)}$ diagonally and this action preserves each singular stratum $\Sigma^{\cO^{(n)}}(x_1,\ldots,x_n)$. We show next that each $\diff_{\mathrm c }(\cO)$-orbit is open in $\Sigma^{\cO^{(n)}}(x_1,\ldots,x_n)$. Let $(z_1,\ldots,z_n)\in \Sigma^{\cO^{(n)}}$ and choose for each $z_i$ an open subset $U_i\ni z_i$ such that $U_i\cap U_j=\emptyset$ if $i\not=j$. Then $U_1\times \ldots \times U_n\subseteq O^{(n)}$. Let $V_i$ be the open subset of $\Sigma^{\cO}(z_i)$ given by Proposition~\ref{prop:open}, where we choose $x=z_i$ and $U=U_i$. Then for each $w_i\in V_i$ there exists $f_i\in \diff_{\mathrm c }(\cO)$ such that $f_i(z_i)=w_i$. Let $g=f_1\circ \ldots \circ f_n$. Since all $f_i$'s have distinct support, we have that $g(z_i)=w_i$. It follows that the $\diff_{\mathrm c}(\cO)$-orbit through each point $(z_1,\ldots,z_n)$ is open in $\Sigma^{\cO^{(n)}}(x_1,\ldots ,x_n)$. Since $\Sigma^{\cO^{(n)}}(x_1,\ldots ,x_n)$ is connected, there can be only one orbit and hence there exists an element $f\in \diff_{\mathrm c }(\cO)$ such that $f(x_1,\ldots,x_n)=(y_1,\ldots, y_n)$.
\end{proof}

\begin{remark}
  The statement can be improved to include multiple points in each connected component of $\Sigma_1$, but the result is more technical to state. The reason is that even in the manifold case the diffeomorphism group does not act $k$-transitively on points if the manifold is one dimensional and $k>1$. Consider for example the one-dimensional manifold $\mR$. Then two tuples $(x_1,\ldots,x_n)$ and $(y_1,\ldots,y_n)$ are said to be \emph{ordered} if $x_1<\ldots <x_n$ and $y_1<\ldots<y_n$. If the $n$-tuples $(x_1,\ldots,x_n)$ and $(y_1,\ldots,y_n)$ are both ordered, then there exists a diffeomorphism $f\in \diff_{\mathrm c }(\mR)$ with $f(x_i)=y_i$, whereas if only one of the $n$-tuples is ordered and the other is not, such a diffeomorphism does not exist. If one of the tuples has the reverse order, e.g. $y_1>y_2>\ldots>y_n$, a diffeomorphism still exists, but it is not isotopic to the identity, as it reverses the orientation of $\mR$. 

  The case for the circle $S^1$ is similar: one can lift a diffeomorphism of $S^1$ to a diffeomorphism of $\mR$ by choosing a base-point in $S^1$ and an initial lift to $\mR$. The points $x_1,\ldots,x_n$ and $y_1,\ldots , y_n$ are then lifted to points $\tilde x_1,\ldots ,\tilde x_n$ and $\tilde y_1,\ldots ,\tilde y_n$. There exists a diffeomorphism that maps $x_i$ to $y_i$ if and only if there exists cyclic permutations $\sigma$ and $\mu$ such that
\[
  \tilde x_{\sigma(1)}<\ldots <\tilde x_{\sigma(n)}\quad \text{and}\quad \tilde y_{\mu(1)}<\ldots <\tilde y_{\mu(n)}.
\]

In the orbifold case each connected component of $\Sigma_1$ is a one dimensional manifold, and we can still make sense of the orderability condition above by orienting these manifolds. We do not think it worthwhile to make a full statement.
\end{remark}

\begin{corollary}
Let $\cO$ be an orbifold with $\dim\cO\geq 2$ and let $x\in \cO$. Then the orbit $\diff_{\mathrm c} (\cO)x$ coincides with $\Sigma(x)$. Hence the orbit $\diff_{\mathrm c }(\cO)x$ is dense in a connected orbifold $\cO$ if and only if $\sdim(x)=\dim \cO$, that is, $x$ is a non-singular point. 
\end{corollary}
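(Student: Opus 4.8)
The plan is to deduce the corollary directly from Theorem~\ref{thm:ktransitivity} applied with $n=1$. For a single point there is no distinctness condition to worry about and the hypothesis on $\Sigma_1$ is vacuous, so the theorem says: given $x,y\in O$ lying in the same connected component of a singular stratum, there is $f\in\diff_{\mathrm c}(\cO)$ isotopic to the identity with $f(x)=y$. In other words $\diff_{\mathrm c}(\cO)x\supseteq \Sigma(x)$. Conversely, by Lemma~\ref{Lem:obstruction} any orbifold diffeomorphism preserves the singular dimension, and since $f$ is in particular a homeomorphism of $O$ carrying $\Sigma_{\sdim(x)}$ to itself, it maps the connected component $\Sigma(x)$ into a connected subset of $\Sigma_{\sdim(x)}$ containing $f(x)$, hence into $\Sigma(f(x))$; applying this to $f$ and to $f^{-1}$ gives $f(\Sigma(x))=\Sigma(f(x))$, and when $f(x)=x'$ lies in $\Sigma(x)$ itself this forces $f(x)\in\Sigma(x)$. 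Therefore $\diff_{\mathrm c}(\cO)x\subseteq\Sigma(x)$, and the two inclusions give equality.

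For the density statement I would argue as follows. If $x$ is non-singular, then $\sdim(x)=\dim\cO=n$ and $\Sigma(x)$ is (a connected component of) the regular set; but the regular set is open and dense in a connected orbifold, so $\diff_{\mathrm c}(\cO)x=\Sigma(x)$ is dense. Actually one must be slightly careful: $\Sigma(x)$ as defined is the connected component of $\Sigma_{\sdim(x)}$ containing $x$, and for $\sdim(x)=n$ this stratum $\Sigma_n=O\setminus\Sigma$ is the whole regular set, which is connected when $\cO$ is connected (the singular set has codimension at least two in the sense relevant here — more precisely, removing the singular strata of codimension $\geq 1$ from a connected orbifold leaves a connected set, since each stratum $\Sigma_k$ with $k<n$ has codimension $\geq 1$ and in fact the worst case $\Sigma_{n-1}$ must still be checked). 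Conversely, if $x$ is singular, then $\sdim(x)\leq n-1$, so $\Sigma(x)$ is contained in the proper closed subset $\bigcup_{k=0}^{n-1}\Sigma_k = \Sigma$, hence $\diff_{\mathrm c}(\cO)x\subseteq\Sigma$ cannot be dense because its closure is contained in $\Sigma$, which has empty interior (as the regular set is dense). This gives the ``if and only if''.

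The main obstacle is the connectedness claim used implicitly for the regular set, and more generally making precise that $\overline{\Sigma(x)}\subseteq\Sigma_{\sdim(x)}\cup(\text{lower strata})$ so that a singular orbit cannot be dense. For the first point, the cleanest route is to observe that $\Sigma=\bigcup_{k=0}^{n-1}\Sigma_k$ is a countable union of manifolds each of dimension at most $n-1$, hence its complement $O\setminus\Sigma$ is connected when $\cO$ (equivalently $O$) is connected — this is a standard local argument using that a connected manifold minus a codimension-$\geq 1$ locally closed submanifold, taken one piece at a time and using paracompactness, remains connected; alternatively one can invoke that the regular part of a connected orbifold is connected, which is well known (see e.g.~\cite{D2015,DThesis}). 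For the second point, density of the regular set immediately yields that any subset of $\Sigma$ has empty interior, so no singular orbit is dense; this is the easy direction. I would therefore state the corollary's proof compactly, citing Theorem~\ref{thm:ktransitivity} and Lemma~\ref{Lem:obstruction} for the orbit identification, and citing connectedness and density of the regular set for the final equivalence.
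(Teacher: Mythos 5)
Your overall route (Theorem~\ref{thm:ktransitivity} with $n=1$ for the inclusion $\Sigma(x)\subseteq \diff_{\mathrm c}(\cO)x$, Lemma~\ref{Lem:obstruction} for the reverse inclusion, then density of the regular set for the equivalence) is the intended one, but the reverse inclusion as you wrote it has a genuine gap. Lemma~\ref{Lem:obstruction} only gives $\sdim(f(x))=\sdim(x)$, i.e.\ $f(x)\in\Sigma_{\sdim(x)}$; it does not place $f(x)$ in the same \emph{connected component} $\Sigma(x)$. Your sentence ``when $f(x)=x'$ lies in $\Sigma(x)$ itself this forces $f(x)\in\Sigma(x)$'' assumes exactly what has to be proved, so the argument is circular. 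Indeed, for a general orbifold diffeomorphism the inclusion is false: a diffeomorphism may permute two distinct components of the same stratum (e.g.\ interchange two cone points with isomorphic isotropy). What saves the statement is that elements of $\diff_{\mathrm c}(\cO)$ are by definition isotopic to the identity: if $(f_t)_{t\in[0,1]}$ is such an isotopy, each $f_t$ is an orbifold diffeomorphism, so by Lemma~\ref{Lem:obstruction} the continuous path $t\mapsto f_t(x)$ stays in $\Sigma_{\sdim(x)}$ and joins $x$ to $f(x)$; since the strata are manifolds, path components coincide with components, whence $f(x)\in\Sigma(x)$. You never invoke the isotopy, so this step is missing.

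Two smaller points. First, your primary justification for connectedness of the regular set --- ``a connected manifold minus a codimension-$\geq 1$ locally closed submanifold remains connected'' --- is false as stated ($\mathbb{R}^2$ minus a line is disconnected); the correct reason is local: in a linear chart $\tilde U/\Gamma_x$ the image of the regular part is connected (the reflection hyperplanes are identified in the quotient, so the chambers map to a single connected set), and a standard open-cover argument then gives connectedness of $O\setminus\Sigma$ for connected $O$. Your fallback of citing this known fact is fine, but the argument you actually sketch does not work. Second, Theorem~\ref{thm:ktransitivity} is stated for connected $\cO$ while the first assertion of the corollary is not; this is harmless, since for $n=1$ one can either restrict to the component of $O$ containing $x$ or argue directly from Proposition~\ref{prop:open} that the orbit is open and closed in the connected set $\Sigma(x)$.
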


\begin{remark}
  The referee posed the following interesting question: are non-effective orbifolds also transitive, in the sense discussed in this paper? We do not know the answer at the time of writing.
\end{remark}

\section{Applications}\label{sec:applications}
\subsection{Displacing curves on a two sphere, and its orbifold analogue.}

The non-transitivity of the orbifold diffeomorphism group has consequences outside of the smooth category, even if we are not concerned with the displacement of points: for instance, in the setting of area preserving maps. We give an example here, and we are confident that the reader can think of more applications.

Let us recall the following property of area preserving maps of the two-sphere. Equip the sphere $S^2$ with the standard area form $\omega$. Consider an embedded simple closed curve $i:S^1\rightarrow S^2$, and let $u_1:D^2\rightarrow S^2$ and $u_2:D^2\rightarrow S^2$ be the two discs bounded by this curve\footnote{It is clear that the curve divides $S^2$ into two compact orientable surfaces with boundary. A simple argument involving the Euler characteristic and the classification of orientable surfaces then shows that these must be discs. }. If we denote by $D_1$ and $D_2$ their image on the sphere, that is, $D_i=u_i(D^2)$, $i=1,2$, then their area is given by $A_1=\int_{D^2}u_1^*\omega$ and $A_2=\int_{D^2}u^*_2\omega$, respectively.
We are going to prove that any such simple closed curve corresponds, under an area preserving diffeomorphism of the sphere, to a circle of constant height.
Consider then such a circle of latitude $L$, chosen so that it divides the sphere into two spherical caps $C_1$ and $C_2$, of area $A_1$ and $A_2$, respectively. 
Identifying $S^1$ with $L$, and using Alexander's trick, we find a diffeomorphism $f:S^2\rightarrow S^2$ which carries the circle of latitude $L$ to the image of $i:S^1\rightarrow S^2$, our original simple closed curve. See Figure~\ref{fig:moser}. The area form $\omega^*=f^*\omega$ is not standard, but the areas of $C_1$ and $C_2$, with respect to $\omega$ and $\omega^*$, are the same. Moreover, in view of the Lagrangian tubular neighborhood theorem, we may assume $\omega$ and $\omega^*$ to already coincide in an open neighborhood of $L$, that is, we may assume $\omega-\omega^*$ to have support in the interior of $C_1$ and $C_2$. Now an application of Moser's stability argument for volume forms \cite{moser} shows that there exists a diffeomorphism of the sphere, fixing the circle $L$, which identifies $\omega^*$ and $\omega$.

If $A_1$ and $A_2$ are not equal, the original circle is mapped to a circle latitude strictly above or below the equator. Via a rotation, which is an area preserving diffeomorphism, the curve can be displaced from itself. Note that the disc with smaller area is mapped into the disc with larger area.

If $A_1=A_2$, the embedded curve is called \emph{monotone} (or \emph{balanced}), and it cannot be displaced by an area preserving map. In fact, by the argument above, we may always assume the image of the curve to be the equator. If the equator could be displaced by an area preserving map, then one hemisphere would have to be mapped onto a proper subset of its interior or of the interior of the opposite hemisphere, contradicting in both cases the assumption that the map is area preserving.~\footnote{A nice example of an area preserving map is the antipodal map $-\id$. Thus any simple closed curve dividing $S^2$ into two regions of the same area, must have antipodal points. A nice visual explanation is given in the numberphile video~\cite{numberphile}}.
\begin{figure}

\def\svgwidth{.5\textwidth}
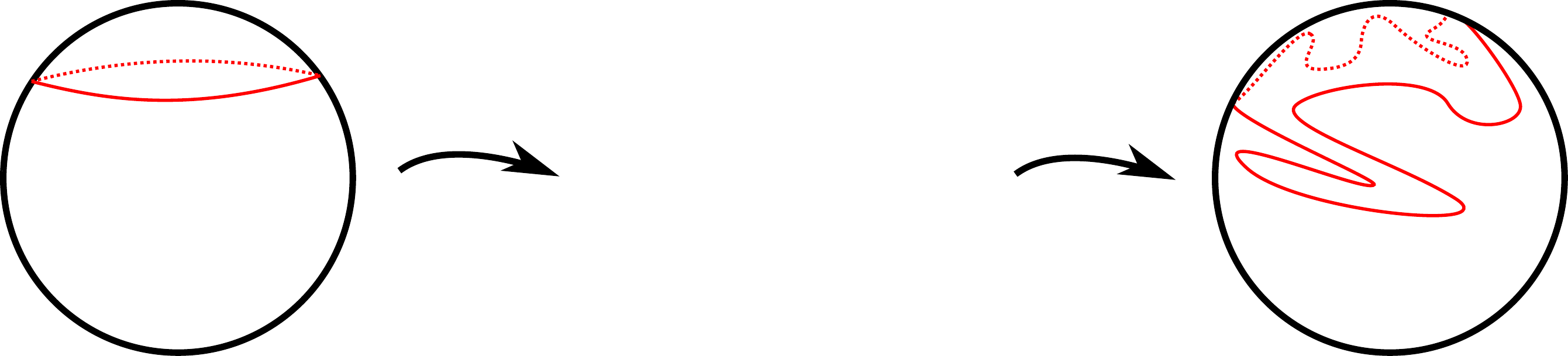
\caption{On the right a simple closed curve on $S^2$ is drawn. Via a diffeomorphism $f$ this is pulled back to a parallel curve. Pulling the standard area form on the right back to the middle picture creates a non-standard area form. Via an additional diffeomorphism the area form can be brought back into standard form. The original curve is pulled back to a circle of constant latitude. The height is completely determined by the ratio of areas $A_1/A_2$. }
\label{fig:moser}
\end{figure}

This discussion can be summarized in the statement below:

\begin{proposition}
A simple closed curve on the two sphere can be displaced from itself by an area preserving diffeomorphism if and only if the curve is not monotone. 
\end{proposition}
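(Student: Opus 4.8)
The plan is to reduce both implications to the \emph{normal form} already extracted in the discussion above: every simple closed curve $\gamma\subset S^2$ is carried, by an area preserving diffeomorphism of $(S^2,\omega)$, to a circle of latitude $L_h$ cutting $S^2$ into two caps of areas $A_1$ and $A_2$, and $L_h$ is the equator precisely when $A_1=A_2$. So I would first nail this normal form down. Given $\gamma$ with bounded discs of areas $A_1,A_2$, the function assigning to a latitude circle the area of the cap below it is continuous and strictly monotone, hence there is a (unique) $L_h$ with caps $C_1,C_2$ of areas $A_1,A_2$. Alexander's trick produces a diffeomorphism $f$ of $S^2$ with $f(L_h)=\gamma$; after precomposing with a diffeomorphism fixing $L_h$ (Weinstein's Lagrangian tubular neighborhood theorem applied to the $1$-manifold $L_h$, which is Lagrangian for an area form in dimension two) we may assume $\omega^{\ast}:=f^{\ast}\omega$ coincides with $\omega$ on a collar of $L_h$; since $f$ respects the decomposition into caps, $\int_{C_i}\omega^{\ast}=A_i=\int_{C_i}\omega$. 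Applying Moser's stability argument for volume forms separately on each cap, with $\omega-\omega^{\ast}$ compactly supported in the interior, yields a diffeomorphism $\psi$ fixing $L_h$ with $\psi^{\ast}\omega^{\ast}=\omega$, so that $f\circ\psi$ is the required area preserving diffeomorphism sending $L_h$ to $\gamma$.

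Granting the normal form, the forward implication is short. If $\gamma$ is not monotone then $A_1\neq A_2$, hence $h\neq 0$. It suffices to displace $L_h$, since conjugating a displacement of $L_h$ by the normal form diffeomorphism displaces $\gamma$, and conjugation preserves the area preserving condition. The rotation $R$ by angle $\pi$ about an axis lying in the plane of the equator is an isometry, hence area preserving, and carries $L_h$ to $L_{-h}$, which is disjoint from $L_h$ because $h\neq 0$.

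For the converse, suppose $\gamma$ is monotone, so $A_1=A_2=\tfrac12\,\mathrm{Area}(S^2)$; conjugating by the normal form, it is enough to show the equator $E$ cannot be displaced. Write $H_1,H_2$ for the two closed hemispheres, of equal area. If some area preserving $\phi$ displaced $E$, then $\phi(E)$ is a simple closed curve disjoint from $E$, hence, being connected, contained in the interior of one hemisphere, say $\mathrm{int}(H_1)$. By the Jordan--Schoenflies theorem inside the open disc $\mathrm{int}(H_1)$, one of the two discs bounded by $\phi(E)$ on $S^2$ — which are exactly $\phi(H_1)$ and $\phi(H_2)$ — has compact closure contained in $\mathrm{int}(H_1)$; call it $\phi(H_j)$. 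Its complement in $\mathrm{int}(H_1)$ is a nonempty open set (it contains points near $E$), so $\mathrm{Area}(\phi(H_j))<\mathrm{Area}(H_1)=\mathrm{Area}(H_j)$, contradicting that $\phi$ preserves $\omega$. The main obstacle is the normal form step, and within it the collar argument that lets us arrange $\omega=\omega^{\ast}$ near $L_h$ before invoking Moser; once that matching is in place everything else is soft topology.
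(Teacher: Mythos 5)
Your argument is correct and follows essentially the same route as the paper: the normal form via Alexander's trick, matching the forms near the latitude circle with the Lagrangian tubular neighborhood theorem, and Moser's stability argument, then a rotation to displace the non-monotone case and an area/containment contradiction for the monotone case. Your added details (the monotone cap-area function selecting the latitude, the explicit rotation by $\pi$, and the Jordan--Schoenflies step in the converse) only make the same proof more explicit.
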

There are various ways to generalize this example. For instance, this result fits into the framework of non-displaceable (monotone) Lagrangian submanifolds, which is due to Floer \cite{floerintersections} (in the exact case)  and Oh~\cite{Oh:1993ek} in the monotone case. In this generality the proofs are much more difficult. 

Let us now move on to the orbifold world. Let $p$ and $q$ be two coprime natural numbers. The $(p,q)$-spindle is the orbifold obtained as $\mathbb{CP}^1(p,q)=S^3//S^1$, where $S^1\subseteq \mC$ acts on $S^3\subseteq \mC^2$ via $t\cdot (z_0,z_1)=(t^pz_0,t^q z_1)$. As this action is isometric with respect to the standard Riemannian metric, the $(p,q)$-spindle is naturally a Riemannian orbifold, and therefore also carries an area form. The argument that follows does not depend on the specific area form. We denote the points on $\mathbb{CP}^1(p,q)$ by the equivalence class $[z_0,z_1]$. The spindle has two isolated singular points at $N=[0,1]$ and $S=[1,0]$, with isotropy $\mathbb{Z}_p$ and $\mathbb{Z}_q$, respectively. Now let $\gamma$ be any simple closed curve that does not pass through the singular points $N$ and $S$. The curve divides the spindle into two parts $D_1$ and $D_2$. There are a few cases to consider. Suppose $N\in D_1$ and $S\in D_2$. Then there is no way to displace the curve from itself by an area preserving map: if $f$ is a diffeomorphism, it must fix $N$ and $S$ by Lemma~\ref{Lem:obstruction}. The curve must then be mapped to the interior of either $D_1$ or $D_2$. Without loss of generality we can assume that it is mapped to the interior of $D_1$. Since the singular point $N$ is fixed, $D_1$ should be mapped to its own interior. But this is clearly not possible in an area preserving manner.

\begin{proposition}
Suppose $p$ and $q$ are coprime. If a simple closed curve on the spindle $\mathbb{CP}^1(p,q)$ divides it into two components, each containing one of the singular points, then it cannot be displaced from itself by an area preserving orbifold diffeomorphism. 
\end{proposition}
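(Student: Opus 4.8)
The plan is to argue by contradiction, fleshing out the sketch given in the paragraph preceding the statement. Suppose $f$ is an area preserving orbifold diffeomorphism of $\mathbb{CP}^1(p,q)$ with $f(\gamma)\cap\gamma=\emptyset$, where $\gamma$ separates the two singular points, say $N\in D_1$ and $S\in D_2$; write $D_i^\circ=D_i\setminus\gamma$ for the corresponding open discs. (The hypothesis presupposes that $N$ and $S$ are both singular, hence $p,q\geq 2$, and since $\gcd(p,q)=1$ this forces $p\neq q$; also recall that the spindle is topologically a $2$-sphere, so a simple closed curve on it separates it into two closed discs.) The first step is to pin down how $f$ acts on the singular set: by Lemma~\ref{Lem:obstruction} the singular stratum $\Sigma_0=\{N,S\}$ is invariant under $f$, so $f$ either fixes $N$ and $S$ or interchanges them; by Lemma~\ref{lem:equiv}, $f$ induces an isomorphism between the isotropy group at a point and the one at its image, and since $\Gamma_N\cong\mathbb{Z}_p$ and $\Gamma_S\cong\mathbb{Z}_q$ are non-isomorphic, the swap is impossible. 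Hence $f(N)=N$ and $f(S)=S$.

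The second step is a purely topological argument. The image $f(\gamma)$ is a simple closed curve contained in $\mathbb{CP}^1(p,q)\setminus\gamma=D_1^\circ\sqcup D_2^\circ$; being connected, it lies entirely in one of these discs, and by the symmetry $(D_1,N)\leftrightarrow(D_2,S)$ we may assume $f(\gamma)\subseteq D_1^\circ$. As $f$ is a homeomorphism, it carries the decomposition $\mathbb{CP}^1(p,q)=D_1\cup D_2$, $D_1\cap D_2=\gamma$, to the analogous decomposition into the two closed discs $f(D_1),f(D_2)$ bounded by $f(\gamma)$, so that $\mathbb{CP}^1(p,q)\setminus f(D_1)=f(D_2)^\circ$. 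Now $D_2$ is connected, disjoint from $f(\gamma)$ (since $f(\gamma)\subseteq D_1^\circ$ and $D_1^\circ\cap D_2=\emptyset$), and contains the point $S=f(S)$, which lies in $f(D_2)^\circ$; hence $D_2\subseteq f(D_2)^\circ$, and therefore $f(D_1)=\mathbb{CP}^1(p,q)\setminus f(D_2)^\circ\subseteq\mathbb{CP}^1(p,q)\setminus D_2=D_1^\circ$. In particular $f(D_1)$ is a compact subset of the open disc $D_1^\circ$, so $D_1^\circ\setminus f(D_1)$ is a non-empty open set.

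For the third and final step I would invoke area preservation: the orbifold area form defines a finite measure $\mathrm{area}(\cdot)$ on $\mathbb{CP}^1(p,q)$ that is positive on non-empty open sets and is preserved by $f$, so $\mathrm{area}(f(D_1))=\mathrm{area}(D_1)=\mathrm{area}(D_1^\circ)$, the boundary $\gamma$ having measure zero. On the other hand $f(D_1)\subseteq D_1^\circ$ with $\mathrm{area}(D_1^\circ\setminus f(D_1))>0$, so $\mathrm{area}(f(D_1))<\mathrm{area}(D_1^\circ)$, a contradiction. I expect the only genuinely delicate point to be the exclusion of the swap $N\leftrightarrow S$ in the first step: area considerations alone do not rule it out — a swap is compatible with $D_1$ and $D_2$ having unequal areas — so it is essential there to use the isotropy obstruction of \cite[Lemma 5.1]{bb03}, i.e. that $\mathbb{Z}_p\not\cong\mathbb{Z}_q$. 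The remainder is routine bookkeeping with the Jordan curve theorem together with positivity and $f$-invariance of the orbifold area.
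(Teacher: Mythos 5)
Your proof is correct and follows essentially the same route as the paper's argument, given in the paragraph preceding the proposition: $f$ must fix the singular points, so the displaced curve lands in the interior of one disc, that disc is then mapped into its own interior, and this contradicts area preservation. Your only refinement is the (accurate) observation that Lemma~\ref{Lem:obstruction} by itself only gives invariance of $\Sigma_0=\{N,S\}$, and that ruling out the swap requires the non-isomorphic isotropy groups $\mathbb{Z}_p\not\cong\mathbb{Z}_q$ (hence the coprimality hypothesis), a point the paper compresses by citing Lemma~\ref{Lem:obstruction} alone.
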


The interesting thing to note is that we are trying to displace a curve that lies entirely in the manifold part of the orbifold, but it still ``feels'' the singularities far away.

If both singular points lie in one region, say $N,S\in D_1$, then we can displace the curve by an area preserving map, provided the area of $D_2$ is smaller than that of $D_1$. 

\subsection{Homotopy classes of maps}

Distinguishing homotopy classes of maps between connected closed manifolds $M$ and $N$ is an interesting and challenging problem. One of the easiest invariants which can be defined to approach this problem is the (embedded) cobordism class of a regular value. A regular value $y$ of a map $f:M\rightarrow N$ defines a submanifold $f^{-1}(y)$ of $M$. If $F: M\times [0,1]\rightarrow N$ is a homotopy between $f$ and $g$, and $y$ is a regular value $y$ of $F$, which is also a regular value of $f$ and $g$, then $F$ defines an embedded cobordism $F^{-1}(y)$ in $M\times [0,1]$ between $f^{-1}(y)$ and $g^{-1}(y)$. 

It follows from the fact that regular values are open and dense, and that the diffeomorphism group acts transitively on $N$, that the cobordism class of a regular value of a map $f:M\rightarrow N$ is independent of the regular value and the homotopy class of $f$.

But this invariant is not strong enough to distinguish some important maps. Let us consider for instance the Hopf fibration, which is homotopically non-trivial. If we view $S^3$ as the unit sphere in $\mathbb{C}^2$, and $S^2$ as the unit sphere in $\mathbb C\times \mR$, then the Hopf fibration is the map $f:S^3\rightarrow S^2$ defined by $f(z_0,z_1)=(2z_0\overline z_1,\vert z_0\vert^2-\vert z_1\vert ^2)$. This map is in fact a generator of $\pi_3(S^2)\cong \mZ$. Every value is regular and the preimage of each regular value is an unknotted, embedded circle in $S^3$. The unknot is the boundary of an embedded disc in $S^3\times [0,1]$, hence the cobordism class of the preimage of a regular value does not distinguish the homotopy class of the Hopf fibration from the homotopy class of the constant map.

Still we can try to obtain further information by considering a pair of distinct points and their preimages. In this setting, a cobordism will consist of a pair of disjoint, embedded surfaces, whose boundary consist of the preimages of different pairs of regular points. Given that the diffeomorphism group acts $2$-transitively on $S^2$, the embedded cobordism class of two regular values is independent of the chosen regular values, and it is independent of the homotopy class. The preimages of two points $p,q\in S^2$ under the Hopf map are two embedded unknots in $S^3$, but they are \emph{linked}. In particular it can be shown that that we cannot find a pair of disjoint, embedded surfaces in $S^3\times[0,1]$ whose boundary consists of these two circles alone. It follows that the Hopf fibration is homotopically non-trivial. In this case this invariant captures the same information as the \emph{framed cobordism} sets of $S^3$, cf.~\cite{milnor1997}, but in general these invariants are different. 

Let us formalize the argument above directly in the language of orbifolds. Given a proper orbifold map $f:\cO\rightarrow \cP$, and a choice of connected components of some singular strata $\Sigma(x_1),\ldots \Sigma(x_n)$, we would like to associate a proper homotopy invariant to $f$. To make sure that regular values are well-defined, and the preimages of regular values are well-defined orbifolds, we need to impose additional structure on the orbifold mappings, and consider for instance complete orbifold mappings \cite{BB2012} or good maps \cite{CROrbifoldGW}. Moreover, we need to define the notion of full suborbifolds. We will not discuss these notions here, but we refer the reader to~\cite{weilandt,mestreweilandt,bb17} for more details. A $k$-dimensional \emph{$n$-colored suborbifold}, is a collection of $n$ disjoint compact full suborbifolds (without boundary) $(\cN_1,\ldots, \cN_n)$ of $\cO$. A cobordism between the $k$-dimensional $n$-colored suborbifolds $(\cN_1,\ldots, \cN_n)$ and $(\cM_1,\ldots, \cM_n)$ is a tuple $(\cW_1,\ldots,\cW_n)$ of $(k+1)$-dimensional compact full suborbifolds of $\cO\times [0,1]$ with boundary $(\cN_1\times\{0\},\ldots, \cN_n\times\{0\})\cup (\cM_1\times\{1\},\ldots, \cM_n\times\{1\})$. We denote the set of $k$-dimensional $n$-colored suborbifolds modulo cobordism by $\mathrm{CEmb}^n_k(\cO). $
Note that the suborbifolds $\cN_i$, $\cM_i$ and $\cW_i$ are not required to be connected. Now perturb $f$  to $g$ such that there exist regular values $y_i\in \Sigma(x_i)$, and define the $n$-colored suborbifold $(\cN_1=g^{-1}(y_1),\ldots ,\cN_n= g^{-1}(y_n))$. The invariance, up to $n$-colored cobordism,  under different choices of regular values now depends on the $n$-transitivity of the compactly supported orbifold diffeomorphism group. But in this orbifold setting, we have to deal with the problem of equivariant transversality: even though the set of regular values of an orbifold mapping is still dense~\cite{BB2012}, if a point $y$ lies in the singular stratum of $P$ it is not clear that the map $f:\cO\rightarrow \cP$ can be made transverse to $y$ by a suitable perturbation. We managed to avoid these transversality issues when $\dim(\cO)=\dim(\cP)$ and developed a degree theory for orbifolds~\cite{pasquottorot}. The general case remains open. 

\noindent

\bibliographystyle{abbrv} 
 \bibliography{transitivity}

\end{document}